\theoremstyle{definition}
\newtheorem{lemma}{Lemma}
\newtheorem*{thm}{Main Theorem}
\newcommand{\m}[1]{{\bf {#1}}}									% Algebras
\newcommand{\lgc}[1]{\ensuremath{{\mathcal #1}}}						% Logics, Sans Serif
\newcommand{\cls}[1]{\ensuremath{{\sf #1}}}						% Varieties, Calligraphic
\newcommand{\opr}[1]{\ensuremath{{\mathbb #1}}}					% Class operator	
\newcommand{\prp}[1]{{\sf #1}}
\renewcommand{\phi}{\varphi}
\newcommand{\fb}{\varphi_\m{B}}
\newcommand{\fc}{\varphi_\m{C}}
\newcommand{\gb}{\psi_\m{B}}
\newcommand{\gc}{\psi_\m{C}}
\newcommand{\emd}{\hookrightarrow}
\newcommand{\var}{\mathsf{var}}
\newcommand{\hm}{\opr{H}}
\newcommand{\pu}{\opr{P}_\mathsf{U}}
\newcommand{\sub}{\opr{S}}
\newcommand{\vr}{\opr{V}}
\newcommand{\hspu}{\hm\sub\pu}
\newcommand{\V}{\cls{V}}
\newcommand{\K}{\cls{K}}
\newcommand{\Vfsi}{{\cls{V}_{\mathrm{FSI}}}}
\newcommand{\C}{\cls{C}}
\newcommand{\M}{\cls{M}}
\newcommand{\LC}{\lgc{C}}
\newcommand{\LM}{\lgc{M}}
\newcommand{\bij}{\mathrel{\hookrightarrow\hspace{-1.8ex}\to}}
\newcommand{\tc}{\textrm{\textup{t}}}
\newcommand{\zr}{\textrm{\textup{f}}}
\g@addto@macro{\endabstract}{\@setabstract}
\newcommand{\authorfootnotes}{\renewcommand\thefootnote{\@fnsymbol\c@footnote}}%
\newcommand\extrafootertext[1]{%
    \bgroup
    \renewcommand\thefootnote{\fnsymbol{footnote}}%
    \renewcommand\thempfootnote{\fnsymbol{mpfootnote}}%
    \footnotetext[0]{#1}%
    \egroup
}
\begin{document}
\begin{center}
  \large
  {\bf Revisiting Interpolation in Relevant Logics} \par \bigskip

  \normalsize
  \authorfootnotes
  Wesley Fussner\footnote{e-mail: \url{fussner@cs.cas.cz}}\textsuperscript{1}, Andrew Tedder\footnote{e-mail: \url{ajtedder.at@gmail.com}}\textsuperscript{2}\par  \bigskip

  \textsuperscript{1}Institute of Computer Science of the Czech Academy of Sciences \par
  \textsuperscript{2}Department of Philosophy I, Ruhr University Bochum\par 
  \footnotetext[0]{W. Fussner's research was supported by the Czech Science Foundation project 25-18306M, INTERACT. A. Tedder's research was supported by the DFG project TE 1611/1-1.}
  \setcounter{footnote}{0}

\end{center}
\begin{abstract}
There are exactly two maximal schematic extensions of the relevant logic $\lgc{R}$ with the variable sharing property. We establish that one of them has a strong form of interpolation for deducibility, thereby giving an example of a well-known relevant logic with interpolation.
\end{abstract}

\medskip
Urquhart showed in \cite{Urq93} that the relevant logic $\lgc{R}$ lacks the Craig interpolation property, as formulated in terms of the usual implication connective $\to$, and that the same holds for many of $\lgc{R}$'s most prominent neighbors.\footnote{We'll use $\lgc{C}$alligraphic letters to denote logics, $\m{B}$oldface letters for particular algebras, and \textsf{S}ans-serif letters for classes of algebras and some properties, such as the \prp{VSP}.} Due to the breadth of Urquhart's study and $\lgc{R}$'s status as the default relevant logic, many logicians today remember these results by the slogan \emph{relevant logics don't have interpolation}. The purpose of this note is to show that this slogan is a misapprehension: We show that $\lgc{R}$ possesses schematic extensions that satisfy both the variable sharing property, often taken to be \emph{the} distinctive feature of relevant logics, as well as a strong form of interpolation with respect to Tarskian consequence.

To be sure, it has long been known that $\lgc{R}$ has extensions that satisfy various interpolation properties, particularly when the language is enriched with constants $\tc$ and $\zr$ for truth and falsity; see, e.g., \cite[p.~450]{Urq93}, \cite[Theorem~4.9]{MarMet2012}, and \cite[Proposition~5.3]{FSsemilin}. However, the extensions considered in the aforementioned studies are dubiously relevant insofar as they lack the following \emph{variable sharing property} (or \emph{\prp{VSP}}):
\begin{equation}
  \tag{\prp{VSP}}\label{eq:VSP}
  \parbox{\dimexpr\linewidth-6em}{%
    \strut
	If $\vdash\alpha\to\beta$, then $\var(\alpha)\cap\var(\beta)\neq\emptyset$.
    \strut
    }
\end{equation}
There is broad consensus in the literature that the \prp{VSP} is at least a necessary condition for a logic to count as relevant \cite{Stan25}, and it has been shown that an extension $\lgc{L}$ of $\lgc{R}$ has the \prp{VSP} if and only of $\lgc{L}$ is contained in one of the logics $\LC$ or $\LM$, which are respectively the logics of the crystal lattice $\m{C}$ and Belnap's \cite{Belnap59} model $\m{M}$ (see \cite{Swi99}). In particular, $\LC$ and $\LM$ are themselves the only maximal schematic extensions of $\lgc{R}$ with the \prp{VSP}. Our main contribution is that $\LC$ has the following \emph{Maehara interpolation property} (or \emph{\prp{MIP}}), a strong form of interpolation for deducibility:
\begin{equation}
  \tag{\prp{MIP}}\label{eq:MIP}
  \parbox{\dimexpr\linewidth-6em}{%
    \strut
	If $\var(\Sigma\cup\{\alpha\})\cap\var(\Gamma)\neq\emptyset$ and $\Sigma,\Gamma\vdash\alpha$, there exists a formula $	\delta$ such that $\var(\delta)\subseteq\var(\Sigma\cup\{\alpha\})\cap\var(\Gamma)$ and both 
	$\Gamma\vdash\delta$ and $\Sigma,\delta\vdash\alpha$.
    }
\end{equation}
This implies, in particular, that $\LC$ has the well-known \emph{deductive interpolation property}, obtained by taking $\Sigma=\emptyset$ in the definition of the \prp{MIP}.

All the results here follow promptly from established methods. We offer no new technical innovations that deepen these methods, but rather aim to point out their application to this context, which has been the subject of misunderstanding. We hope here to clear up this misunderstanding, while also publicizing the utility of the technical tools we invoke here.\\

\paragraph{\bf Algebraic preliminaries.} To make sense of our main theorem's proof, we first give some background definitions. For further information on algebraic preliminaries, please consult \cite{GalatosJipsenKowalskiOno2007}.

If $\K$ is any class of similar algebras, a \emph{span in $\K$} is a pair of embeddings $\langle\fb\colon\m{A}\emd\m{B},\fc\colon\m{A}\emd\m{C}\rangle$ between members of $\K$. If $\K'$ is a class of algebras in the same similarity type of $\K$ and $\K\subseteq\K'$, an \emph{amalgam in $\K'$} of a span $\langle\fb\colon\m{A}\emd\m{B},\fc\colon\m{A}\emd\m{C}\rangle$ is a pair $\langle\gb\colon\m{B}\emd\m{D},\gc\colon\m{C}\emd\m{D}\rangle$ of embeddings in $\K'$ such that $\gb\fb=\gc\fc$ (see Figure~\ref{fig:AP and TIP}, left). The class $\K$ has the \emph{amalgamation property} (or \emph{\prp{AP}}) if every span in $\K$ has an amalgam in $\K$. On the other hand, a class $\K$ has the \emph{transferable injections property} (or \emph{\prp{TIP}}) if whenever $\langle\fb\colon\m{A}\to\m{B},\fc\colon\m{A}\emd\m{C}\rangle$ is a pair of homomorphisms in $\K$ such that $\fc$ is an embedding, there exists an algebra $\m{D}\in\K$ and a pair of homomorphisms $\langle\gb\colon\m{B}\emd\m{D},\gc\colon\m{C}\to\m{D}\rangle$ such that $\gb$ is an embedding and $\gb\fb=\gc\fc$ (see Figure~\ref{fig:AP and TIP}, right).

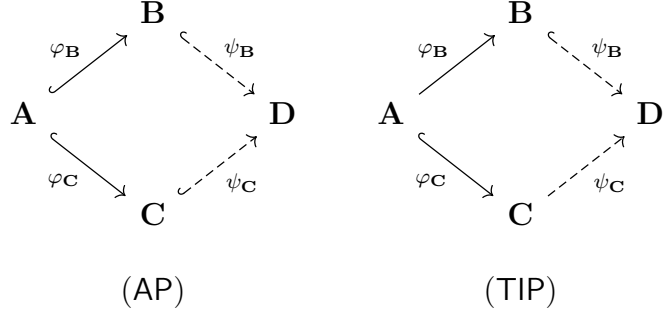
\begin{figure}[t]
\centering
\captionsetup{justification=centering}
\begin{tabular}{ccc}
\begin{tikzcd}
	& {\bf B} \\
	{\bf A} && {\bf D} \\
	& {\bf C}
	\arrow["{\fb}", hook, from=2-1, to=1-2]
	\arrow["{\gb}", dashed, hook, from=1-2, to=2-3]
	\arrow["{\fc}"', hook, from=2-1, to=3-2]
	\arrow["{\gc}"', dashed, hook, from=3-2, to=2-3]
\end{tikzcd}
& &
\begin{tikzcd}
	& {\bf B} \\
	{\bf A} && {\bf D} \\
	& {\bf C}
	\arrow["{\fb}", from=2-1, to=1-2]
	\arrow["{\gb}", dashed, hook, from=1-2, to=2-3]
	\arrow["{\fc}"', hook, from=2-1, to=3-2]
	\arrow["{\gc}"', dashed,  from=3-2, to=2-3]
\end{tikzcd}
\\ \\
(\prp{AP}) & & (\prp{TIP})
\end{tabular}
\caption{The commutative diagrams illustrating the \prp{AP} (left) and the \prp{TIP} (right).}
\label{fig:AP and TIP}
\end{figure}

A class $\K$ of algebras has the \emph{congruence extension property} (or \emph{\prp{CEP}}) if for any $\m{A},\m{B}\in\K$ and any congruence $\Theta$ of $\m{A}$, if $\m{A}$ is a subalgebra of $\m{B}$, then there exists a congruence $\Phi$ of $\m{B}$ such that $\Phi\cap A^2 = \Theta$.  An algebra $\m{A}$ is \emph{simple} if its only congruences are the identity congruence $\Delta_\m{A} = \{(x,x)\mid x\in A\}$ and the full congruence $\nabla_\m{A} = A\times A$.

An algebra $\m{A}$ is called \emph{finitely subdirectly irreducible} if the identity congruence of $\m{A}$ is meet irreducible in the lattice of congruences of $\m{A}$, i.e., whenever $\Delta_\m{A}=\Theta\cap\Phi$ for congruences $\Theta$ and $\Phi$ of $\m{A}$, either $\Delta_\m{A}=\Theta$ or $\Delta_\m{A}=\Phi$. When $\V$ is a variety, we denote by $\Vfsi$ the class of finitely subdirectly irreducible algebras in $\V$.

It is well known that a variety $\V$ has the \prp{TIP} if and only if it has both the \prp{CEP} and the \prp{AP}; see, e.g., \cite{Fussner2024}. Further, if $\lgc{L}$ is an algebraizable logic and $\V$ is a variety comprising its equivalent algebraic semantics then, by \cite[Theorem 2.2]{Czel1985}, $\lgc{L}$ has the \prp{MIP} if and only if $\V$ has the \prp{TIP}.

For any class $\K$ of similar algebras, we write $\hm(\K)$ for the class of homomorphic images of algebras in $\K$, $\sub(\K)$ for the class of subalgebras of algebras in $\K$, and $\pu(\K)$ for the class of ultraproducts of members of $\K$. Finally, the variety generated by a class $\K$ of similar algebras is denoted $\vr(\K)$.\\

\paragraph{\bf The main argument.} We turn to the proof of our main theorem. We rely on the fact that $\lgc{R}$ (without constants) has as its equivalent algebraic semantics the variety $\cls{R}$ of \emph{relevant algebras}; see \cite{FontRod1990}. We consider, in particular, the crystal lattice $\m{C}$ which is specified by the labeled Hasse diagram given in Figure~\ref{fig:C}. The variety generated by $\m{C}$ is denoted by $\C$, and the schematic extension of $\lgc{R}$ corresponding to this variety is denoted by $\LC$.\footnote{An axiomatic presentation of $\LC$ can be found in \cite[p.~114]{RLR2}.}

\begin{figure}
\centering
\begin{tikzpicture}[
place/.style={circle,draw=black,fill=black, minimum size = 4pt, inner sep = 0pt},
square/.style={regular polygon,regular polygon sides=4},
place2/.style={square,draw=black,fill=black, minimum size = 5.5pt, inner sep = 0pt},
place3/.style={square,draw=black, minimum size = 5.5pt, inner sep = 0pt},
place4/.style={circle,draw=black, minimum size = 4pt, inner sep = 0pt}]

   \node[place] (top) at (0,-1.6) {};
   \node[place4] (tm) at (0,-2.4) {};
  \node[place] (ml) at (-.8,-3.2) {};
    \node[place] (mr) at (0.8,-3.2) {};
  \node[place] (bm) at (0,-4) {};
  \node[place] (bot) at (0,-4.8) {}; 
   
  \node[left] () at (top) {$ab=\zr^2=\top$\;\;};
  \node[left] () at (tm) {$\zr$\;\;};
  \node[left] () at (ml) {$\neg a = a^2 = a$\;};
  \node[right] () at (mr) {\;$b = b^2 = \neg b$};
  \node[left] () at (bm) {$\neg\zr=\tc$\;\;};
  \node[left] () at (bot) {$\neg\top=\bot$\;\;};
  
  \draw (bot) -- (bm) -- (mr) -- (bm) -- (ml) -- (tm) -- (mr) -- (tm) -- (top);
\end{tikzpicture}
\caption{Labeled Hasse diagram for the relevant algebra $\m{C}$. Here we follow the standard convention that idempotent elements are denoted with filled in nodes, whereas non-idempotent elements are denoted with nodes that are not filled in.}
\label{fig:C}
\end{figure}
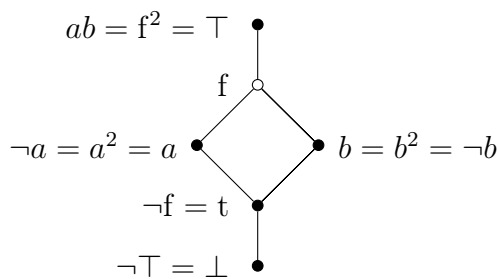

\begin{lemma}\label{lem:CEP}
$\C$ has the congruence extension property.
\end{lemma}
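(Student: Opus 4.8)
The plan is to translate \prp{CEP} for $\C$ into a statement about deductive filters and then exploit the fact that the filter generated by a subset admits an \emph{absolute} description, i.e., one given by data preserved under subalgebra inclusions. Since $\lgc{R}$ is algebraizable with equivalent algebraic semantics $\cls{R}\supseteq\C$, for every $\m{A}\in\C$ the lattice $\Con{\m{A}}$ is isomorphic to the lattice $\mathrm{Fi}(\m{A})$ of deductive filters of $\m{A}$, and under this isomorphism a congruence $\Theta$ relates $a,b$ exactly when the biresidual $\mu(a,b)=(a\to b)\meet(b\to a)$ lies in the corresponding filter $F$ (see \cite{GalatosJipsenKowalskiOno2007}). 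Given $\m{A}\le\m{B}$ in $\C$ and $\Theta\in\Con{\m{A}}$ with associated filter $F$, I would let $\Phi$ be the congruence of $\m{B}$ associated with the filter $\mathrm{Fg}^{\m{B}}(F)$ of $\m{B}$ generated by $F$. Because $\mu(a,b)$ is a term, it is computed identically in $\m{A}$ and $\m{B}$ for $a,b\in A$; hence $a\mathrel\Phi b \iff \mu(a,b)\in\mathrm{Fg}^{\m{B}}(F)\cap A$, so $\Phi\cap A^2=\Theta$ \emph{provided} $\mathrm{Fg}^{\m{B}}(F)\cap A=F$. This reduces the lemma to the single claim that generating a filter upward into $\m{B}$ adds no new elements of $A$.

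I would establish that claim using the standard description of generated deductive filters in (commutative, square-increasing) residuated lattices: an element $b$ lies in $\mathrm{Fg}^{\m{B}}(F)$ if and only if $\varphi\le b$ for some $\varphi$ obtained as a finite fusion product $f_1\cdots f_n$ of members of $F$ (together with the values of theorems of $\lgc{R}$), where the fusion is the monoid operation of the relevant algebra. Both fusion and the order are preserved by the inclusion $\m{A}\le\m{B}$, so if $b\in A$ is witnessed by such a $\varphi$, then $\varphi\in A$ and $\varphi\le b$ already in $\m{A}$; hence $b\in\mathrm{Fg}^{\m{A}}(F)=F$. The reverse inclusion $F\subseteq\mathrm{Fg}^{\m{B}}(F)\cap A$ is trivial, so $\mathrm{Fg}^{\m{B}}(F)\cap A=F$ and the reduction is complete.

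The main obstacle is the filter-generation description invoked in the second step — equivalently, the (contraction) deduction theorem for $\lgc{R}$ — which is precisely where the relevant algebraic structure is used: commutativity makes conjugation trivial, so generated filters need no closure under conjugates, and square-increasingness ($x\le x^2$, the algebraic form of contraction) keeps the generation within finite fusion products rather than a more intricate iterative closure. Once one is confident that membership in $\mathrm{Fg}^{\m{B}}(F)$ is certified by a single fusion product of elements of $F$, absoluteness does the rest. I would alternatively phrase the whole argument through convex normal subalgebras of the residuated-lattice reduct, where the identical point is that the convex normal subalgebra generated by $F$ is obtained by sandwiching between finite products of generators, a description that transfers verbatim from $\m{A}$ to $\m{B}$.
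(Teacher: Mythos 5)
Your overall reduction---passing from congruences to deductive filters via algebraizability and trying to show $\mathrm{Fg}^{\m{B}}(F)\cap A=F$---is a legitimate reformulation of the \prp{CEP}, but the argument you give for that claim has a genuine gap, and in fact the method cannot work. The decisive objection is that your proof uses nothing specific to $\m{C}$: it invokes only commutativity and square-increasingness, which hold in \emph{every} relevant algebra. If it were sound, it would equally establish the \prp{CEP} for the variety $\M$ generated by Belnap's model $\m{M}$, which (as noted at the end of the paper, citing Czelakowski--Dzobiak) fails the \prp{CEP}. So the argument proves too much.

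The concrete technical failure is in your description of the generated deductive filter. Because $\lgc{R}$ here has no constant $\tc$ in the language, a deductive filter of $\m{B}$ must contain the values of all theorems under all valuations into $\m{B}$, and these values need not lie above any element of $F$, nor in $A$. Take $\m{A}$ to be the subalgebra of $\m{C}$ with universe $\{\bot,a,\top\}$ and $F=\{a,\top\}$ its least deductive filter. The upward closure of finite fusion products of members of $F$ is $\{a,\zr,\top\}$ (square-increasingness collapses products onto meets, exactly as you say), but this set omits $\tc=\zr\to\zr$, a theorem value of $\m{C}$, so it is not a deductive filter of $\m{C}$; the actual generated filter is $\{\tc,a,b,\zr,\top\}$. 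Your ``absoluteness'' step---``if $b\in A$ is witnessed by such a $\varphi$, then $\varphi\in A$''---therefore breaks down: the witnessing element necessarily involves theorem values computed at points of $B\setminus A$. (Equivalently: without $\tc$, $\lgc{R}$ has only a \emph{parameterized} local deduction theorem, and the parameters can introduce new variables, which is precisely what destroys absoluteness.) In this particular instance the intersection with $A$ still happens to equal $F$, but that has to be checked by inspecting $\m{C}$ itself, which is what the paper does by an entirely different route: it reduces the \prp{CEP} for the congruence-distributive variety $\C$ to its finitely subdirectly irreducible members (a transfer theorem of Fussner--Metcalfe), locates those inside $\hm\sub(\m{C})$ by J\'{o}nsson's Lemma, and verifies by direct computation that every subalgebra of $\m{C}$ is simple, after which the \prp{CEP} is immediate. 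Some such finite, model-specific verification is unavoidable.
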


\begin{proof}
By \cite[Theorem~2.3]{Fussner2024}, a congruence-distributive variety $\V$ has the \prp{CEP} if and only if its class of finitely subdirectly irreducible members does.

Note that $\C$ is congruence distributive since relevant algebras have lattice reducts. Moreover, by J\'{o}nsson's Lemma (see, e.g., \cite[Section~1.2.6]{GalatosJipsenKowalskiOno2007}), $\C_\mathrm{FSI}\subseteq\hspu(\m{C}) = \hm\sub(\m{C})$. It suffices to show that $\C_{\mathrm{FSI}}$ has the \prp{CEP}. Direct computation shows that the proper subalgebras of $\C$ have universes
\begin{align*}
& \{a\}, \{b\},\\
& \{\top,\bot\}, \\
& \{\bot,a,\top\}, \{\bot,b,\top\},\\
& \{\bot,\tc,\zr,\top\},\\
& \{\bot,\tc,a,\zr,\top\},\text{ and } \{\bot,\tc,b,\zr,\top\}.
\end{align*}
One may show that all of these are simple by direct computation. For example, suppose there were a non-trivial congruence $\Theta$ on $\{\bot,\tc,\zr,\top\}$ other than the identity. Note that if $\langle\zr,\tc\rangle\in\Theta$, then $\Theta$ is the trivial congruence, as then $\langle\zr\rightarrow\tc,\zr\rightarrow\zr\rangle\in\Theta$ (where $x\rightarrow y=\neg(x\cdot\neg y)$) and so $\langle\bot,\tc\rangle\in\Theta$. It follows that $\langle\top,\zr\rangle\in\Theta$, so $\Theta$ is trivial. But every non-trivial congruence on $\{\bot,\tc,\zr,\top\}$ identifies $\tc$ and $\zr$: For example, if $\langle\top,\tc\rangle\in\Theta$ then $\langle\top\land\zr,\top\land\tc\rangle=\langle\zr,\tc\rangle\in\Theta$. Thus $\{\bot,\tc,\zr,\top\}$ is simple. Let's consider also $\{\bot,\tc,a,\zr,\top\}$. If $\langle a,\tc\rangle\in\Theta$, then $\langle a\rightarrow\tc,\tc\rightarrow\tc\rangle=\langle\bot,\tc\rangle\in\Theta$, and so $\Theta$ is trivial. Suppose that $\langle a,\top\rangle\in\Theta$. Then $\langle\zr\land\top,\zr\land a\rangle=\langle\zr,a\rangle\in\Theta$, and it follows that $\Theta$ is trivial. Similar computations may be performed in the remaining cases.
\end{proof}

For our main theorem, we need a suitable characterization of the \prp{AP}, supplied in the next lemma. For this, we say that an algebra $\m{A}$ is \emph{extensible} if whenever $\m{A}_1,\m{A}_2$ are non-trivial subalgebras of $\m{A}$ and $\varphi\colon\m{A}_1\bij\m{A}_2$ is an isomorphism, there is an automorphism $\hat{\varphi}\colon\m{A}\bij\m{A}$ of $\m{A}$ extending $\varphi$. Note that the lemma follows directly from \cite[Corollary~3.16]{Kearnes1991}. A different proof may be given by using \cite[Corollary~3.5]{Fussner2024}, and a third proof, using the results of \cite{FSbasic,FSsemilin}, is given in \cite{SantschiThesis}.

\begin{lemma}\label{lem:extensible}
Let $\m{A}$ be any finite simple algebra with the congruence extension property, and suppose that $\V=\vr(\m{A})$ is congruence distributive. Then $\V$ has the amalgamation property if and only if $\m{A}$ is extensible.
\end{lemma}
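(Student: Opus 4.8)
The plan is to translate the amalgamation property for $\V$ into a concrete, checkable condition on the single finite algebra $\m{A}$, and the whole argument hinges on one structural dictionary. First I would record what J\'{o}nsson's Lemma gives: since $\V=\vr(\m{A})$ is congruence distributive and $\m{A}$ is finite, $\Vfsi\subseteq\hspu(\m{A})=\hm\sub(\m{A})$. Moreover, because $\m{A}$ is simple and has the \prp{CEP}, \emph{every} subalgebra $\m{B}\le\m{A}$ is itself simple: any congruence of $\m{B}$ extends to a congruence of $\m{A}$, which is $\Delta_\m{A}$ or $\nabla_\m{A}$, and so the original congruence is $\Delta_\m{B}$ or $\nabla_\m{B}$. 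Hence the nontrivial members of $\Vfsi$ are, up to isomorphism, precisely the nontrivial subalgebras of $\m{A}$, all of them simple. This is the bridge between congruence-theoretic data in $\V$ and subalgebra data in $\m{A}$ that both directions exploit.

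For the forward direction (\prp{AP} $\Rightarrow$ extensible), suppose $\varphi\colon\m{A}_1\bij\m{A}_2$ is an isomorphism between nontrivial subalgebras of $\m{A}$. I would form the span consisting of the inclusion $\m{A}_1\emd\m{A}$ and the composite $\m{A}_1\xrightarrow{\varphi}\m{A}_2\emd\m{A}$, and amalgamate it in $\V$ to obtain embeddings $u,v\colon\m{A}\emd\m{D}$ with $u\restriction_{\m{A}_1}=v\circ\varphi$. Since $\vr(\m{A})$ is locally finite, I may replace $\m{D}$ by the finite subalgebra generated by $u(\m{A})\cup v(\m{A})$ and take a subdirect decomposition $\m{D}\emd\prod_i\m{S}_i$ into simple factors $\m{S}_i$, each a subalgebra of $\m{A}$ and so of cardinality at most $|\m{A}|$. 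As $u$ is an embedding and $\m{A}$ is simple, some projection $\pi_i\circ u$ is injective, hence an isomorphism $\m{A}\bij\m{S}_i$; the identity $u\restriction_{\m{A}_1}=v\circ\varphi$ together with the nontriviality of $\m{A}_2$ then forces $\pi_i\circ v$ to be non-constant, hence (again by simplicity) also an isomorphism. The composite $(\pi_i v)^{-1}\circ(\pi_i u)$ is then an automorphism of $\m{A}$, and a direct check using $\pi_i u\restriction_{\m{A}_1}=\pi_i v\circ\varphi$ shows it extends $\varphi$.

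For the backward direction (extensible $\Rightarrow$ \prp{AP}), the strategy is to reduce the \prp{AP} for $\V$ to the amalgamation of spans all of whose members lie in $\Vfsi$; this is exactly what congruence distributivity and the \prp{CEP} provide, and is the content of \cite[Corollary~3.16]{Kearnes1991} and the structure theory of \cite{Fussner2024}. Granting the reduction, such a span may be taken as $\langle f\colon\m{P}\emd\m{B},\,g\colon\m{P}\emd\m{C}\rangle$ with $\m{P},\m{B},\m{C}$ nontrivial simple subalgebras of $\m{A}$. Then $g\circ f^{-1}\colon f(\m{P})\bij g(\m{P})$ is an isomorphism between nontrivial subalgebras of $\m{A}$, so extensibility supplies an automorphism $\sigma$ of $\m{A}$ with $\sigma(f(p))=g(p)$ for all $p\in\m{P}$. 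Taking $\m{D}=\m{A}$, with $\psi_\m{B}=\sigma\restriction_{\m{B}}$ and $\psi_\m{C}$ the inclusion $\m{C}\emd\m{A}$, yields an amalgam, since $\psi_\m{B}f=\psi_\m{C}g$ by construction; hence $\V$ has the \prp{AP}.

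The step I expect to be the main obstacle is the reduction in the backward direction: passing from arbitrary spans in $\V$ to spans among finitely subdirectly irreducible members. Everything else is either the structural bookkeeping of the first paragraph or the two short realizations of an amalgam inside $\m{A}$ itself, but the reduction is where the genuine machinery for congruence-distributive varieties with the \prp{CEP} is needed, and it is precisely the point at which I would lean on the cited results rather than reprove it from scratch.
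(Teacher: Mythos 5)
Your argument is correct, but it is worth noting that the paper does not actually prove this lemma at all: it simply observes that the statement follows from \cite[Corollary~3.16]{Kearnes1991}, and mentions \cite[Corollary~3.5]{Fussner2024} and \cite{SantschiThesis} as alternative sources. What you have written is essentially a worked-out version of the second of these routes. Your opening dictionary (J\'{o}nsson's Lemma plus the observation that simplicity and the \prp{CEP} of $\m{A}$ force every subalgebra of $\m{A}$ to be simple, so that the non-trivial members of $\Vfsi$ are exactly the non-trivial subalgebras of $\m{A}$ up to isomorphism) is sound, and your forward direction is a complete, self-contained argument: local finiteness of $\vr(\m{A})$, the subdirect decomposition of the finite amalgam, and the cardinality bound $|\m{S}_i|\leq|\m{A}|$ together with simplicity do exactly what you claim, and the composite $(\pi_i v)^{-1}\circ(\pi_i u)$ is indeed an automorphism extending $\varphi$. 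Your backward direction is also correct granted the reduction of the \prp{AP} to spans of finitely subdirectly irreducible algebras; you are right that this is the one genuinely non-elementary ingredient, and it is precisely what \cite[Corollary~3.5]{Fussner2024} supplies (its hypotheses --- congruence distributivity, the \prp{CEP}, and closure of $\Vfsi$ under subalgebras --- all hold here). The only point deserving a little extra care is that the reduction can in principle produce spans whose base, or whose images $f(\m{P})$ and $g(\m{P})$, are one-element subalgebras, which the paper's definition of extensibility does not cover; this edge case is inherited from the lemma's formulation rather than introduced by your proof, and is absorbed by the cited general results. In short: where the paper outsources the entire lemma to the literature, you outsource only the FSI-reduction and prove the rest, which makes your version more informative at no cost in correctness.
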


\begin{thm}
The logic $\lgc{C}$ has both the variable sharing property and the Maehara interpolation property.
\end{thm}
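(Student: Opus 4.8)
The variable sharing property requires no new work: it follows from the characterization recalled above (due to \cite{Swi99}) that $\LC$ is one of the two maximal schematic extensions of $\lgc{R}$ with the \prp{VSP}, so in particular $\LC$ has the \prp{VSP}. The task is therefore to establish the \prp{MIP}, and the plan is entirely algebraic. Since $\LC$ is algebraizable with equivalent algebraic semantics the variety $\C = \vr(\m{C})$, \cite[Theorem~2.2]{Czel1985} reduces the \prp{MIP} to showing that $\C$ has the \prp{TIP}; as the \prp{TIP} is the conjunction of the \prp{CEP} and the \prp{AP}, and Lemma~\ref{lem:CEP} already supplies the \prp{CEP}, everything comes down to proving that $\C$ has the \prp{AP}.

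To obtain the \prp{AP} I would apply Lemma~\ref{lem:extensible} to the generating algebra $\m{C}$, whose hypotheses are easily met: $\m{C}$ is finite; $\C$ is congruence distributive because relevant algebras have lattice reducts; $\m{C}$ has the \prp{CEP} as a member of $\C$ by Lemma~\ref{lem:CEP}; and $\m{C}$ is simple. The simplicity is checked by the same style of computation used in Lemma~\ref{lem:CEP}, showing that any congruence identifying two distinct elements is full---for instance, a congruence identifying $a$ and $b$ identifies $a = a^2$ with $ab = \top$ and, applying $\neg$, identifies $a = \neg a$ with $\neg\top = \bot$, hence identifies $\top$ with $\bot$; the other cases are analogous. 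Lemma~\ref{lem:extensible} then reduces the \prp{AP} for $\C$ to the extensibility of $\m{C}$.

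The crux is thus to verify that $\m{C}$ is extensible, and here the plan is to exploit the evident $a\leftrightarrow b$ symmetry of $\m{C}$. The transposition $\sigma$ that interchanges $a$ and $b$ while fixing $\bot,\tc,\zr,\top$ should be the unique non-identity automorphism of $\m{C}$: one verifies directly that the operation tables are invariant under exchanging $a$ and $b$ (using $\neg a = a$, $\neg b = b$, $a^2 = a$, $b^2 = b$, and $a\cdot b = b\cdot a = \top$), while any automorphism must fix $\bot$ and $\top$ and then the unique atom $\tc$ and coatom $\zr$, leaving only $a,b$ free. Reading off the proper subalgebras from Lemma~\ref{lem:CEP}, every non-trivial one is a chain under the induced order, so it has no non-identity automorphism and at most one isomorphism onto another such chain. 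Consequently the isomorphisms between non-trivial subalgebras are just: the identity on each, extending by $\mathrm{id}_\m{C}$; the unique order-isomorphisms between $\{\bot,a,\top\}$ and $\{\bot,b,\top\}$ and between $\{\bot,\tc,a,\zr,\top\}$ and $\{\bot,\tc,b,\zr,\top\}$, each a restriction of $\sigma$ and so extending by $\sigma$; and the two automorphisms of $\m{C}$ itself, which are already global. This yields extensibility, whence $\C$ has the \prp{AP}, hence the \prp{TIP}, and hence $\LC$ has the \prp{MIP}.

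The main obstacle is the extensibility step, and within it the two finite verifications on which everything rests: that $\sigma$ really is an automorphism, i.e.\ that the full product and implication tables of $\m{C}$ are symmetric in $a$ and $b$, and that the inventory of subalgebra isomorphisms above is complete. Once the proper subalgebras are seen to be chains and $\sigma$ is confirmed to be an automorphism, the remaining case analysis is short, since chains are rigid and isomorphisms between them are order-determined.
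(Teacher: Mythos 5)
Your proposal is correct and follows essentially the same route as the paper: reduce the \prp{MIP} to the \prp{TIP} via Czelakowski's theorem, split the \prp{TIP} into the \prp{CEP} (Lemma~\ref{lem:CEP}) plus the \prp{AP}, and obtain the \prp{AP} from Lemma~\ref{lem:extensible} by showing $\m{C}$ is extensible via the $a\leftrightarrow b$ transposition automorphism. If anything, you are slightly more explicit than the paper, since you verify the hypotheses of Lemma~\ref{lem:extensible} (notably the simplicity of $\m{C}$ itself) and exhaustively enumerate the isomorphisms between non-trivial subalgebras, where the paper reduces to the $\varphi(a)=b$ case ``without loss of generality.''
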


\begin{proof}
We have already noted that $\lgc{C}$ has the \prp{VSP}, so it suffices to show that $\C$ has the \prp{TIP}, which holds if and only if $\C$ has the both \prp{AP} and the \prp{CEP}. That $\C$ has the \prp{CEP} follows from Lemma~\ref{lem:CEP}. It is enough that $\C$ has the \prp{AP} and, by Lemma~\ref{lem:extensible}, we need only show that $\m{C}$ is extensible.

Let $\m{A}$ and $\m{B}$ be distinct, non-trivial isomorphic subalgebras of $\C$. These are already listed in the proof of Lemma~\ref{lem:CEP}. Without loss of generality, we may take $a\in\m{A}$, $b\in\m{B}$, and further assume that the isomorphism $\varphi\colon\m{A}\bij\m{B}$ between them satisfies $\varphi(a)=b$. Any such isomorphism is extended by the automorphism $\hat{\varphi}\colon\C\bij\C$ defined by
\[ \hat{\varphi}(x)=\begin{cases} 
      b & x= a \\
      a & x= b \\
      x & otherwise.
   \end{cases}
\]
Thus $\C$ is extensible, and the result follows.
\end{proof}

Two closing remarks are in order. Firstly, while we have seen that $\LC$ has both the \prp{VSP} and \prp{MIP}, other ways of construing `relevance' and `interpolation' are, of course, possible. Rather than the \prp{VSP}, one may take, for example, the Basic Relevance Property of Avron as in \cite{Avron2014}. Instead of the \prp{MIP}, one might take the Craig interpolation property formulated in terms of $\to$.\footnote{That $\LC$ lacks Craig interpolation follows already from Urquhart's result in \cite[Sec. 11.6]{RLR2}, which uses $\m{C}$ to provide the counterexample.} The combinations between various relevance and interpolation properties are numerous. Rather than catalog all such interactions, we seek here only to show that relevance and interpolation are compatible under \emph{some} given readings. We hope that this note stimulates a fuller inquiry into \emph{how} various relevance properties and interpolation properties interact.

Secondly, and in suggesting directions of the aforementioned inquiry, we note that the other maximal extension of $\lgc{R}$ with the \textsc{VSP} does not have the \prp{MIP}, since the variety $\M$ generated by Belnap's model $\m{M}$ does not have the \prp{CEP}; see \cite[p.~289]{Czel1999}. Indeed, it follows from \cite[Corollary~2.11]{Kearnes1989} that $\M$ does not have the \prp{AP}, and hence does not even validate the Robinson consistency theorem. However, this is not sufficient to determine whether this logic has the deductive interpolation property, which remains an open question.

\end{document}